\setlist{leftmargin=10mm}
\newtheoremstyle{mytheoremstyle} % name
    {4mm}                    % Space above
    {4mm}                    % Space below
    {\itshape}                   % Body font
    {10mm}                           % Indent amount
    {\bfseries}                   % Theorem head font
    {. \, --- \,}                          % Punctuation after theorem head
    {0.5em}                       % Space after theorem head
    {}  % Theorem head spec (can be left empty, meaning ?normal?)
\theoremstyle{mytheoremstyle}
\newtheorem{theorem}{Theorem}[section]
\newtheorem*{theorem*}{Theorem}
\newtheorem{proposition}[theorem]{Proposition}
\newtheorem{lemma}[theorem]{Lemma}
\newtheoremstyle{mydefinitionstyle} % name
    {4mm}                    % Space above
    {4mm}                    % Space below
    {}                   % Body font
    {10mm}                           % Indent amount
    {\bfseries}                   % Theorem head font
    {.\, --- \,}                          % Punctuation after theorem head
    {0.5em}                       % Space after theorem head
    {}  % Theorem head spec (can be left empty, meaning ?normal?)
\theoremstyle{mydefinitionstyle}    
\newtheorem{definition}[theorem]{Definition}
\newtheoremstyle{example}% name of the style to be used
  {4mm}% measure of space to leave above the theorem. E.g.: 3pt
  {4mm}% measure of space to leave below the theorem. E.g.: 3pt
  {}% name of font to use in the body of the theorem
  {10mm}% measure of space to indent
  {\bfseries}% name of head font
  {. \, --- \,}% punctuation between head and body
  { }% space after theorem head; " " = normal interword space
  {\thmname{#1}\thmnumber{ #2}\textnormal{\thmnote{ (#3)}}}
\theoremstyle{example}
\newtheorem*{example*}{Example}
\newtheorem*{examples*}{Examples}
\newtheorem*{remark*}{Remark}
\newtheorem*{remarks*}{Remarks}
\xpatchcmd{\proof}{\hskip\labelsep}{\hskip6.5\labelsep}{}{}  %% change 5 here as you wish
\newcommand{\ignore}[1]{}
\newcommand{\ol}[1]{\overline{#1}}
\newcommand{\ti}[1]{\widetilde{#1}}
\renewcommand{\i}{{\mathrm{i}}}
\newcommand{\HP}{\text{HP}}
\def\ad{\ensuremath{\textnormal{ad}}}
\def\E{\ensuremath{\mathscr{E}}}
\def\R{\ensuremath{\mathcal{R}}}
\def\A{\ensuremath{\mathscr{A}}}
\def\B{\ensuremath{\mathscr{B}}}
\def\L{\ensuremath{\mathscr{L}}}
\def\Rc{\ensuremath{\mathcal{R}}}
\def\C{\ensuremath{\mathbb{C}}}
\def\R{\ensuremath{\mathbb{R}}}
\def\Z{\ensuremath{\mathbb{Z}}}
\def\D{\ensuremath{\mathbf{D}}}
\def\dd{\ensuremath{\mathbf{d}}}
\def\bB{\ensuremath{\overline{B}}}
\def\End{\ensuremath{\textnormal{End}}}
\def\Hom{\ensuremath{\textnormal{Hom}}}
\def\id{\ensuremath{\textnormal{id}}}
\def\Tr{\ensuremath{\textnormal{Tr}}}
\def\index{\ensuremath{\textnormal{Index}}}
\def\dim{\ensuremath{\textnormal{dim}}}
\def\KK{\ensuremath{\textnormal{KK}}}
\def\CC{\ensuremath{\textnormal{CC}}}
\def\tr{\ensuremath{\textnormal{tr}}}
\def\ch{\ensuremath{\textnormal{ch}}}
\def\Th{\ensuremath{\widehat{T}}}
\tikzset{join/.code=\tikzset{after node path={%
\ifx\tikzchainprevious\pgfutil@empty\else(\tikzchainprevious)%
edge[every join]#1(\tikzchaincurrent)\fi}}}
\tikzset{>=stealth',every on chain/.append style={join},
         every join/.style={->}}
\tikzstyle{labeled}=[execute at begin node=$\scriptstyle,
\begin{document}
\newgeometry{bottom=2.5cm, top=2cm, hmargin=2.5cm}
\sloppy
\title{Algebra cochains, the bivariant JLO cocycle and the Mathai--Quillen form}

\author{Rudy Rodsphon}
\date{August 1, 2022}
\address{Washington University \\ 1 Brookings Dr \\ Department of Mathematics $\&$ Statistics, Cupples Hall \\ St Louis MO 63130, United States}
\email{rrudy@wustl.edu}

\maketitle

\vspace{-0.5cm}
\setlength{\parindent}{0mm}
\setlength{\mathindent}{15mm}

\begin{abstract}
This is a first investigation by the author of the similarity between Quillen's superconnection formalism, his constructions of (periodic) cyclic cocycles via algebra cochains on a bar construction, and Kasparov bimodules for KK-theory. In this article, we do so by deriving a slight extension of the Mathai--Quillen Thom form via a bivariant JLO cocycle. The main idea (which is in fact not really new) is that KK-cycles should be thought of as superconnection forms; these methods will be applied to other contexts elsewhere.    
\end{abstract}

\vspace{5mm}

\section*{\centering Introduction} \setcounter{section}{0}
 
Historically, cyclic (co)homology came from two directions. In one, Connes introduced cyclic cohomology in order to find an extension of the de Rham cohomology to the realm of noncommutative geometry, and thereby a suitable receptacle for the Chern character (cf. \cite{ConIHES, ConBook}). In the other, cyclic cohomology occurs, in the work of Tsygan \cite{Tsy} and subsequently Loday--Quillen \cite{LQ}, in relationship to Lie theory and notably the Lie algebra of matrices, and makes extensive of a resolution of the cyclic complex via the cyclic bicomplex (commonly called the Loday--Quillen--Tsygan complex).   \\

In a remarkable paper \cite{Qui88}, Quillen merges these two  approaches of cyclic theory and introduces a framework that enables the construction of most of the interesting cyclic cohomology cocycles through methods imported from Chern--Weil theory. The main ingredients are: (i) the bar construction equipped with its natural dg-coalgebra struture, whose cocommutator subspace identifies to the cyclic bicomplex (up to a dimension shift); (ii) the fact that this structure gives rise to a dg-algebra structure on cochains, in which the bar differential and 1-cochains play respectively fulfill the roles that the de Rham differential and connection 1-forms play in the context of connection and curvature calculations. As a byproduct, most of the cyclic classes relevant to index theory may be obtained by exponentiating the curvature of a superconnection in the algebra of bar cochains; K-homology/K-theory cycles are to be viewed as a superconnection forms. \\

Later on, Perrot \cite{Per02} extended the algebra cochain formalism to the bivariant setting, and produces bivariant cyclic classes starting from a Kasparov bimodule, by viewing the latter as a superconnection form. This leads in turn to the construction of a bivariant Chern character via an explicit formula. \\

This set of ideas have found interesting applications, among which we shall only mention the elegant construction of the Connes--Moscovici residue cocycle \cite{CM95} found by Higson in \cite{Hig06}, and more recently the equivariant index theorems for non-proper, non-isometric actions obtained by Perrot \cite{Per2014a, Per2014b}, which in turn have found applications to the transverse index problem of Connes--Moscovici \cite{CM95, CM98} in subsequent joint work with the author \cite{PerRod16}. \\

In this article, our objectives are more modest, and we simply recompile different aspects of the algebra cochain formalism to prove that the Mathai--Quillen Thom form can be recovered via these methods, promoting it to a bivariant cyclic cocycle. The idea is essentially to compute (a part of) the bivariant Chern character of the Thom isomorphism, by interpreting suitably the representative of its KK-class as a superconnection form. Actually, this result is already known and mentioned in the original paper of Mathai-- Quillen, though not exactly in this format. \\

Throughout the whole process, we shall reexamine, at least in the aforementioned example, that the algebra cochain formalism completely subsumes the classical superconnection formalism, in such a way that the similarity we can draw with the Chern--Weil theory is not merely an analogy; its mechanisms may be reproduced practically mutadis mutandis. \\

While trying to understand the thought process that led Quillen to such remarkable constructions via his mathematical journal, it was interesting to note that he already knew cyclic cohomology, KK-theory, the Kasparov product and its application to the families index theorem, before his work on superconnections. Knowing that the latter was motivated by the local families index theorem, it might be speculated that he introduced superconnections as a device to relate KK-theory to cohomology, the same way as is done in K-theory via connections and the Chern character. The idea of constructing a bivariant Chern character that is functorially compatible with KK-theory, and especially with the Kasparov product, was achieved much later in joint work with Cuntz \cite{CQ98}), but we think it could have been in his mind since the early stages of noncommutative geometry. This would call a historical investigation, that may appear in future works together with an extended version of this note, including other known examples of cocycles obtained via the algebra cochain formalism. \\    

Other applications of the techniques in this paper will also be treated elsewhere. Among these, we mention the reduction of classical equivariant index problems to the case of a free action\footnote{This idea is already well-known in the folklore...}. In another direction, These techniques may also be applied to connect Kasparov's recently developed approach to transverse index theory to the work of Berline--Vergne/Paradan--Vergne \cite{BV96, PV08}. More speculatively, we think another potential application could be a direct construction of the fundamental cyclic class in the context of the algebraic index theorem \cite{NT96, FFS04}. 

\vspace{3mm}
  
\subsection*{Plan of the article} Section 1 is a preliminary section featuring some material about Quillen's algebra cochain formalism, its extension to the bivariant situation, outlining the proofs of some results that will be useful to use further. Section 2 applies the theory to examples: first with the special case of the Bott element, and then showing how to recover the Mathai--Quillen form via the Thom element viewed as a Kasparov bimodule.   

\vspace{3mm}

\subsection*{Acknowledgements} The authors wants to thank Denis Perrot and Xiang Tang warmly for interesting discussions and encouragements. 

\vspace{3mm}

\section{Cyclic theory and algebra cochains}

This section is essentially a recapitulation of the elements of cyclic theory that we will need, and sets the notations that will be used throughout. It may also be of some use to readers who are not acquainted with Quillen's approach to cyclic (co)homology, and especially his algebra cochains formalism \cite{Qui88}, which is the basis in Perrot's approach of the bivariant JLO formula/Chern character \cite{Per02}. 

\subsection{Algebra of noncommutative differential forms and the $(b,B)$-bicomplex}

Let $\A$ be an associative algebra over $\C$, and let $\ti{A}=\C\oplus \A$ be its unitalization.  Recall that the \emph{algebra of noncommutative differential forms} $(\Omega \A, d)$ is the free algebra generated by $a \in \A$ and symbols $db$ with $b \in \A$, where $d: \A \to \A$ is a linear map satisfying the Leibniz rule 
\[ d(a_0 a_1) = a_0 \cdot da_1 + da_0 \cdot a_1 \]
$\Omega \A$ naturally comes equipped with a filtration, 
\[ \Omega \A = \bigoplus_{k \geq 0} \Omega^{k} \A \quad ; \quad \Omega^{k} \A = \{ a_0 da_1 \ldots da_k \, ; \, a_0, \ldots, a_k \in A \}.\]
Moreover, there is a natural isomorphism of vector spaces: 
\begin{align*}
&\Omega^{k} \A \overset{\displaystyle{\simeq}}{\longrightarrow} \ti{A} \otimes \A^{\otimes k} \quad ; &a_0 da_1 \ldots da_k  & \longmapsto  a_0 \otimes a_1 \otimes \ldots \otimes a_k &&&&&&&&&&&&&&&&&&&&&&&&&&&&\\
& & da_1 \ldots da_k & \longmapsto 1 \otimes a_1 \otimes \ldots \otimes a_k &&&&&&&&&&&&&&&&&&&&&&&&&&&&
\end{align*}
The universal differential $d$ is extended to $\Omega \A$ by setting
\begin{align*}
d (a_0 d a_1 \ldots d a_k) = d a_0 d a_1 \ldots d a_k \quad ; \quad d (d a_1 \ldots d a_k) = 0 
\end{align*}
Let $\natural: \Omega \A \to \Omega \A_{\natural} := \Omega \A/[\Omega \A, \Omega \A]$ be the canonical projection. The composite map $\natural d: \Omega \A \to \Omega \A_{\natural}$ vanishes on the commutator subspace $[\Omega \A, \Omega \A]$, hence it factors through a map $\natural d: \Omega \A_{\natural} \to \Omega \A_{\natural}$ whose square is zero. The cohomology of the corresponding $\Z_2$-graded complex 
\[ \begin{tikzpicture} 
\matrix(m)[matrix of math nodes, row sep=3em, column sep=2.5em, text height=2ex, text depth=0.25ex]{ \Omega^{\mathrm{even}} \A  & \Omega^{\mathrm{odd}} \A_{\natural} \\ }; 
\path[->,font=\scriptsize, >=angle 90] 
(m-1-1)([yshift= 2pt]m-1-1.east) edge node[above] {$\natural d$} ([yshift= 2pt]m-1-2.west) (m-1-2)
(m-1-2)([yshift= -3pt]m-1-2.west) edge node[below] {$\natural d$} ([yshift= -3pt]m-1-1.east) (m-1-1) ;
\end{tikzpicture} \]
is Karoubi's \emph{noncommutative de Rham homology} $H^{\bullet}_{\mathrm{dR}}(\A)$. \\

One then defines two differentials on $\Omega \A$: the \emph{Hochschild boundary} $b : \Omega^{k+1} \A \to \Omega^{k} \A$ and \emph{Connes' operator} $B : \Omega^k \A \to \Omega^{k+1} \A$ defined as follows: 
\begin{multline*}
b(a_0 d a_1 \ldots d a_{k+1}) = a_0 a_1 d a_2 \ldots d a_{k+1} + \sum_{i=1}^{k} (-1)^{i} a_0 d a_1 \ldots d (a_i a_{i+1}) \ldots d a_{k+1} \\
+ (-1)^{k+1} a_{k+1} a_0 d a_1 \ldots d a_k,
\end{multline*}
\vspace{-0.5mm}
\[ B(a_0 d a_1 \ldots d a_k) = d a_0 d a_1 \ldots d a_k + (-1)^{k} d a_n d a_0 d a_1 \ldots d a_{k-1} + \ldots + (-1)^{kk}  d a_1 \ldots d a_{k} d a_0. \]

\vspace{1mm}

As $b^2 = B^2 = Bb+bB = 0$, these differentials make $\Omega \A$ into a bicomplex called the \emph{$(b,B)$-bicomplex in homology}. \\

\ignore{\[\begin{tikzpicture}
\matrix(m)[matrix of math nodes, row sep=3em, column sep=2.5em, text height=2ex, text depth=0.25ex]
    {        & \vdots     & \vdots      & \vdots   \\
      \ldots & \Omega^2 \A & \Omega^1 \A  & \Omega^0 \A  \\ 
      \ldots & \Omega^1 \A & \Omega^0 \A  & \\ 
      \ldots & \Omega^0 \A &             & \\};
\path[->, font=\scriptsize, >=angle 90] 
(m-1-2) edge node[left]{$b$} (m-2-2) 
(m-2-2) edge node[left]{$b$} (m-3-2)
(m-3-2) edge node[left]{$b$} (m-4-2)
(m-1-3) edge node[left]{$b$} (m-2-3) 
(m-2-3) edge node[left]{$b$} (m-3-3)
(m-1-4) edge node[left]{$b$} (m-2-4)
(m-2-4) edge node[above]{$B$} (m-2-3) 
(m-2-3) edge node[above]{$B$} (m-2-2)
(m-2-2) edge node[above]{$B$} (m-2-1)
(m-3-3) edge node[above]{$B$} (m-3-2)
(m-3-2) edge node[above]{$B$} (m-3-1)
(m-4-2) edge node[above]{$B$} (m-4-1);
\end{tikzpicture} \]}

Let $\widehat{\Omega}\A$ denote the direct product $\prod_{k} \Omega^{k} \A$. The \emph{periodic cyclic homology} $\HP_{\bullet}(\A)$ of $\A$ is the homology of the 2-periodic complex
\[ \begin{tikzpicture} 
\matrix(m)[matrix of math nodes, row sep=3em, column sep=2.5em, text height=2ex, text depth=0.25ex]{ \widehat{\Omega}^{\mathrm{even}} \A  & \widehat{\Omega}^{\mathrm{odd}} \A \\ }; 
\path[->,font=\scriptsize, >=angle 90] 
(m-1-1)([yshift= 2pt]m-1-1.east) edge node[above] {$B+b$} ([yshift= 2pt]m-1-2.west) (m-1-2)
(m-1-2)([yshift= -3pt]m-1-2.west) edge node[below] {$B+b$} ([yshift= -3pt]m-1-1.east) (m-1-1) ;
\end{tikzpicture} \]
where 
\begin{align*}
&\widehat{\Omega}^{\mathrm{even}} \A = \prod_{k \geq 0} \Omega^{2k} \A, \quad \widehat{\Omega}^{\mathrm{odd}} \A = \prod_{k \geq 0} \Omega^{2k+1} \A
\end{align*}
Instead of a direct sum, the direct product is required to get a non-trivial homology. \\

For $k \geq 0$, let $\CC^{k}(\A)$ be the dual of $\Omega^k\A$, that is, the space of $(k+1)$-linear forms on $\ti{A}$ verifying $ \varphi(a_0, \ldots, a_k) = 0 $ if $a_i = 1$ for at least one $i \geq 1$. The 2-periodic complex leads by duality to \emph{$(b,B)$-bicomplex in cohomology}: the (continuous) dual of $\widehat{\Omega} \A$ (for the filtration topology) is the direct sum
\[ \CC^{\bullet}(\A) = \bigoplus_{k \geq 0} \CC^{k}(\A). \]
The \emph{periodic cyclic cohomology} $\HP^{\bullet}(\A)$ of $\A$ is the cohomology of the dual $2$-periodic complex giving cyclic homology, or equivalently, the total complex of the \emph{$(b,B)$-bicomplex in cohomology}
\[ \begin{tikzpicture} 
\node (a) at (0,0) {$\CC^{\mathrm{even}}(\A) $}; 
\node (b) at (2.7,0) {$\CC^{\mathrm{odd}}(\A) $}; 
\path[->,font=\scriptsize, >=angle 90] 
([yshift= 2pt]a.east) edge node[above] {$B+b$} ([yshift= 2pt]b.west) 
([yshift= -3pt]b.west) edge node[below] {$B+b$} ([yshift= -3pt]a.east);
\end{tikzpicture} \]
where
\begin{align*}
&\CC^\mathrm{even}(\A) = \bigoplus_{k \geq 0} \CC^{2k}(\A), \quad \CC^\mathrm{odd}(\A) = \bigoplus_{k \geq 0} \CC^{2k+1}(\A)
\end{align*}

 %The universal algebra $\Omega A$ serves to define the \emph{cyclic homology} of $A$. Let us recall the $(b,B)$-complex for homology, i.e its dual complex gives the $(b,B)$-complex for cohomology given in Introduction, Section \ref{section intro Bb complexe}. 

\vspace{1mm}

\subsection{Algebra cochains} \label{Construction Quillen}

Suppose that $\ti{A}=\C\oplus{A}$ is an augmented algebra. The \emph{bar construction} $\bB(\ti{\A})$ of $\ti{A}$, denoted simply $\bB$ when the context is clear, is the differential graded (dg) coalgebra $\bB(\A) = \bB = \bigoplus_{n \geq 0} \bB_n$, where $\bB_n = \A^{\otimes n}$ for $n \geq 0$, with \emph{coproduct} $\Delta : \bB \to \bB \otimes \bB$
\begin{equation*}\Delta (a_1, \ldots, a_n) = \sum_{i=0}^n (a_1, \ldots, a_i) \otimes (a_{i+1}, \ldots, a_n) \end{equation*}
 
The \emph{counit} map $\eta$ is the projection onto $\A^{\otimes 0} = \C$, and the \emph{differential} is $b'$ :
\begin{equation*} b'(a_1, \ldots, a_{n+1}) = \sum_{i=1}^n (-1)^{i-1}(a_1, \ldots, a_i a_{i+1}, \ldots, a_{n+1}) \end{equation*}
which is defined as the zero-map on $\bB_0$ and $\bB_1$. \\

These operations endow $\bB$ with a structure of dg-coalgebra. Note that $b'$ and $\eta$ are morphisms of (graded) complexes, i.e $\Delta b' = (b' \otimes \id + \id \otimes b') \Delta$ and $\eta b' = b' \eta$. In addition, we use a standard abuse of notation identifies strings in the terms $i=0$ and $i=n$ of the coproduct formula as $(a_1, \ldots, a_n) \otimes 1$ and $1 \otimes (a_1 \ldots, a_n)$. When $\A$ is unital, note here that $1 \in \A^{\otimes 0} = \C$ \textbf{is not} the unit of $\A$. \\

Let $\Hom(\bB,\L)$ denote the space of $n$-linear maps over $\A$ with values in a (dg-)algebra $\L$. It has a differential $b'$: 
\begin{equation*} \delta f = (-1)^{n+1} fb' \end{equation*}
for $f \in \Hom^n(\bB,\L)$. The coproduct on the bar construction induces a product on $\Hom(\bB,L)$: if f and g are respectively cochains of degrees $p$ and $q$,  it is given by
\begin{equation*} 
fg(a_1, \ldots, a_{p+q}) = (-1)^{pq} f(a_1, \ldots, a_p)g(a_{p+1}, \ldots, a_{p+q}) 
\end{equation*}
Therefore, $\Hom(\bB,\L)$ has a structure of dg-algebra. 

\begin{example*} \label{ex connection curvature} Let $\rho : \A \to \L$ be a 1-cochain, i.e a linear map. See $\rho$ as a "connection" and define its "curvature" $\omega = \delta\rho + \rho^{2}$. An easy calculation shows that 
\[ \omega(a_1, a_2) = \rho(a_1 a_2) - \rho(a_1) \rho(a_2) \]
that is, $\rho$ is a homomorphism of algebras if and only if its curvature vanishes. One has a Bianchi-type identity 
\[ \delta \omega = - [\rho, \omega] \]
and more generally, using that $\delta$ and $[\rho, \centerdot]$ are derivations, we get by induction
\[ \delta \omega^{n} = - [\rho, \omega^{n}]. \hfill{\square} \]
\end{example*}

%\vspace{2mm}

We next define $\Omega_1 \bB$ and $\Omega_1^{\natural} \bB$ to be the following bicomodules over $\bB = \bB$:
\begin{align*}
& \Omega_1 \bB := \bB \otimes \ti{\A} \otimes \bB, \quad  \Omega_1^{\natural} \bB := \ti{\A} \otimes \bB = \Omega \A
\end{align*}
Notice that $\Omega_1^{\natural}(\bB)$ is the algebra of noncommutative differential forms $\Omega\A$. The exponent $\natural$ means that $\Omega_1^{\natural}\bB$ is the cocommutator subspace of $\Omega_1 \bB$, but this won't be used in the sequel. \\

Therefore, we have three different types of bar cochains, that we distinguish through the following terminology: \emph{$\Omega$-cochains} are elements of $\Hom(\Omega_1 \bB, \L)$, and \emph{Hochschild cochains} are elements of $\Hom(\Omega \A, \L)$. \emph{Bar cochains} is then be used for elements of $\Hom(\bB, \L)$. In general, the \emph{degree} of a cochain is the \emph{total degree}, meaning for example that if $f \in \Hom(\bB_p, \L_q)$, then the degree of $f$ is $|f|=p+q$; $p$ and $q$ are respectively referred to as the $\A$- and $\L$- degrees. This terminology applies to other types of cochains as well, keeping in mind that elements in $\A$ have degree $1$.\\

The $\bB$-bicomodule structure of $\Omega_1 \bB$ naturally induces a $\Hom(\bB, \L)$-bimodule structure on the space $\Hom(\Omega_1 \bB, \L)$ of $\Omega$-cochains, which is precisely described via the operation:
\begin{align*}
(\gamma \cdot f)(a_1, \ldots, a_{p-1}) & \otimes a_p \otimes  (a_{p+1}, \ldots, a_n)&\, \\
& = \, m(\gamma \otimes f)(1 \otimes \Delta_{\text{right}})(a_1, \ldots, a_{p-1}) \otimes a_p \otimes (a_{p+1}, \ldots, a_n)&  \\
&\, = \, \sum_{i=0}^{n-p} (-1)^{i|\gamma|}\gamma\big((a_1, \ldots, a_{p-1}) \otimes a_p \otimes (a_{p+1}, \ldots, a_{p+i})\big) f(a_{p+i+1}, \ldots, a_n).& 
\end{align*}
for the right-module structure; for the left-module structure, $f \cdot \gamma$ may be described by a formula of the same type. The symbol $m$ denotes multiplication in $\L$. \\

The differential $b'$ on $\bB$ naturally induces a differential $b''$ on $\Omega_1 \bB$, so the differential $\delta$ on bar cochains naturally carries over $\Omega$-cochains: for an $\Omega$-cochain $\gamma \in \Hom(\Omega_1^{\natural} \bB, \L)$, one defines
\begin{gather*}
\delta \gamma = (-1)^{|\gamma|+1} \gamma b'' \, , \: \: \: \text{where} \\
\begin{split}
b''(a_1, \ldots, a_{p-1}) \otimes a_p \otimes (a_{p+1}, \ldots, a_n) \,\, = &\,\,\,\, b'(a_1, \ldots, a_{p-1}) \otimes a_p \otimes (a_{p+1}, \ldots, a_n) \\ 
& + (-1)^{p-2} (a_1, \ldots, a_{p-2}) \otimes a_{p-1} a_p \otimes (a_{p+1}, \ldots, a_n) \\
& + (-1)^{p-1} (a_1, \ldots, a_{p-1}) \otimes a_p a_{p+1} \otimes (a_{p+2}, \ldots, a_n) \\
& + (-1)^{p} (a_1, \ldots, a_{p-1}) \otimes a_p \otimes b'(a_{p+1}, \ldots, a_n)
\end{split}
\end{gather*} 

With this at hand, we may now give Quillen's description of the $(b,B)$-bicomplex. 
%

%\setlength{\parindent}{6mm}
%\textsc{Important fact.} A cochain $f$ of this kind has three degrees : a $\A$-degree as a multilinear map over $\A$, a $L$ degree and a total degree $f$, which is sum. This is the one which will be considered. \\

%There is a canonical injection $\natural : \Omega \A \to \Omega_1 \bB$: 
%\begin{equation*}
%\natural (a_1 \otimes (a_2, \ldots, a_n)) = \sum_{i=1}^n (-1)^{i(n-1)} (a_{i+1}, \ldots, a_n) \otimes a_1 \otimes  (a_2, \ldots, a_i) 
%\end{equation*}
%Given a (graded) trace $\tau : \L \longrightarrow \C$, we then obtain a morphism of complexes
%\begin{align*}
%\begin{array}{c c c c}
%\tau^\natural : & \Hom(\Omega_1 \bB,\L) & \longrightarrow & \Hom(\Omega \A, \C) \\
%                &  f                 & \longmapsto     & \tau^\natural(f) = \tau f \natural
%\end{array}
%\end{align*} 

\begin{theorem} \label{thm: 2-periodic complex} One has a 2-periodic complex:
\[ \begin{tikzpicture} 
\matrix(m)[matrix of math nodes, row sep=3em, column sep=2.5em, text height=2ex, text depth=0.25ex]{ \bB=\bB(\ti{\A})  & \Omega \A \\ }; 
\path[->,font=\scriptsize, >=angle 90] 
(m-1-1)([yshift= 2pt]m-1-1.east) edge node[above] {$\beta$} ([yshift= 2pt]m-1-2.west) (m-1-2)
(m-1-2)([yshift= -3pt]m-1-2.west) edge node[below] {$\overline{\partial}$} ([yshift= -3pt]m-1-1.east) (m-1-1) ;
\end{tikzpicture} \]
%\begin{equation*}\xymatrix{
%\ldots \ar[r]^-{\overline{\partial}} & B \ar[r]^-{\beta}  & \Omega^{B, \natural} \ar[r]^-{\overline{\partial}} & B \ar[r]^-\beta & \ldots} \end{equation*}
where \; $\beta: \bB \to \Omega \A$ \; ; \; $\overline{\partial} = \partial \natural: \Omega \A \to \bB $ \; ; \; $\natural: \Omega \A \to \Omega_1\bB$ \; ; \; $\partial: \Omega_1 \bB \to \bB$ are morphisms of complexes defined as follows:
\begin{gather*}
\beta(a_1, \ldots, a_n) = (-1)^{n-1} a_n \otimes (a_1, \ldots, a_{n-1}) - a_1 \otimes (a_2, \ldots, a_n) \\
\natural (a_1 \otimes (a_2, \ldots, a_n)) = \sum_{i=1}^n (-1)^{i(n-1)} (a_{i+1}, \ldots, a_n) \otimes a_1 \otimes  (a_2, \ldots, a_i) \\
\partial (a_{1}, \ldots, a_{p-1}) \otimes a_p \otimes  (a_{p+1}, \ldots, a_n) = (a_1, \ldots, a_n) \\
\overline{\partial} (a_1 \otimes (a_2, \ldots, a_n)) = \sum_{i=1}^n (-1)^{i(n-1)} (a_{i+1}, \ldots, a_n, a_1, a_2, \ldots, a_i) 
\end{gather*}
\end{theorem}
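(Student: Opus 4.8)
The statement packages three claims: \textbf{(i)} $\beta$, $\natural$ and $\partial$ --- hence also $\overline{\partial}=\partial\natural$ --- intertwine the relevant differentials, namely $b'$ on $\bB$, the Hochschild boundary $b$ on $\Omega\A$, and $b''$ on $\Omega_1\bB$, up to the usual Koszul signs; \textbf{(ii)} the two round trips $\overline{\partial}\beta$ and $\beta\overline{\partial}$ vanish, so that the diagram is a $2$-periodic complex. The organizing device is the signed cyclic operator on $\bB_n=\A^{\otimes n}$,
\[ \lambda(a_1,\ldots,a_n)=(-1)^{n-1}(a_n,a_1,\ldots,a_{n-1}),\qquad N=\id+\lambda+\cdots+\lambda^{n-1}, \]
which satisfies $\lambda^n=\id$ because $n(n-1)$ is even. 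Under the canonical vector-space identification $\Omega^{n-1}\A\cong\ti{\A}\otimes\A^{\otimes(n-1)}$, the explicit formulas exhibit $\beta$ as $\lambda-\id$ followed by the inclusion $\bB_n=\A\otimes\A^{\otimes(n-1)}\hookrightarrow\Omega^{n-1}\A$, and exhibit $\overline{\partial}$, restricted to the summand $\A\otimes\A^{\otimes(n-1)}$, as precisely the norm $N$ (its $i$-th term being $\lambda^{\,n-i}$ applied to $(a_1,\ldots,a_n)$, the prefactor $(-1)^{i(n-1)}$ matching the sign carried by $\lambda^{\,n-i}$).

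I would first dispatch $\partial$: the operator $b''$ was tailored so that the concatenation map $\partial$ carries its four groups of terms onto the $n-1$ terms of $b'$ on the joined string, so $\partial b''=\pm b'\partial$ is a short sign verification. Next comes the chain-map property of $\natural$: expand $\natural b$ and $b''\natural$ on a generator $a_1\,da_2\cdots da_n$ and match term by term --- the interior merges $d(a_ja_{j+1})$ coming from $b$ reproduce, after applying $\natural$, the two ``middle'' merging terms and the two $b'$'s inside $b''\natural$, while the wrap-around term of $b$ (proportional to $a_na_1$) is absorbed by the cyclic relabeling built into $\natural$. Composing shows $\overline{\partial}=\partial\natural$ is a morphism of complexes. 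For $\beta$ itself, the identification above turns $b\beta=\pm\beta b'$ into a standard identity of cyclic homology tying the full Hochschild boundary $b$ to its truncation $b'$ through the operator $\id-\lambda$, which I would verify by expanding both sides (or deduce from the companion relation for the norm $N$ together with the splitting of $b$ into $b'$ plus the wrap-around term).

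For the $2$-periodicity: $\beta$ takes values in the summand $\A\otimes\A^{\otimes(n-1)}$, on which $\overline{\partial}$ acts as $N$, so $\overline{\partial}\beta$ corresponds to $N(\lambda-\id)=\lambda^n-\id=0$; symmetrically $\overline{\partial}$ takes values in $\bB$, on which $\beta$ acts as $\lambda-\id$, so on the summand $\A\otimes\A^{\otimes(n-1)}$ of $\Omega\A$ the composite $\beta\overline{\partial}$ corresponds to $(\lambda-\id)N=\lambda^n-\id=0$, and $\beta\overline{\partial}$ vanishes on the remaining summand of $\Omega\A$ as well. Hence $\bB\xrightarrow{\beta}\Omega\A\xrightarrow{\overline{\partial}}\bB$, together with its internal differentials $b'$ and $b$, is a $2$-periodic complex.

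The only genuinely laborious step --- and the one I would flag as the main obstacle --- is the chain-map property of $\natural$, equivalently of $\overline{\partial}$: the signs $(-1)^{i(n-1)}$ interfere with the Hochschild signs, and several edge cases must be treated (a merge straddling the slot of $a_1$, the wrap-around term of $b$, and the convention on the summand $\C\otimes\A^{\otimes n}$ of $\Omega^n\A$ of pure forms $da_1\cdots da_n$). Pinning these signs down is exactly what forces the two composites to be $\lambda^n-\id$ rather than $\lambda^n+\id$; everything else is either formal (the identity $\lambda^n=\id$) or a one-line computation ($\partial$, and the identification of $\beta$ with $\lambda-\id$).
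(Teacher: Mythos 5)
Your proposal is correct and essentially matches the route the paper takes, which is to identify $\beta$ and $\overline{\partial}$ with the horizontal arrows $\lambda-\id$ and $N$ of the Loday--Quillen--Tsygan cyclic bicomplex. The paper gives no detailed proof of this theorem (it records only that ``a direct calculation shows $b''\natural=\natural b$'' and otherwise defers to the observation that the diagram reconstitutes the LQT bicomplex), so your explicit identifications, the sign check $(-1)^{(n-i)(n-1)}=(-1)^{i(n-1)}$, and the norm relations $N(\lambda-\id)=(\lambda-\id)N=0$ are a faithful expansion of the same argument.
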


\begin{remark*} All the operators above are subsequently defined on cochains by duality. Note also that $\partial$ being a coderivation, it induces at such a derivation on $\Omega$-cochains. 
\end{remark*}

A direct calculation shows that $b'' \natural = \natural b$, so the differential $\delta$ induced on $\Omega_1^{\natural} \bB = \Omega \A$ is the Hochschild boundary. Consequently, we deduce that the complex $(\Hom(\Omega_1^ \natural \bB, \C), b)$ is isomorphic to the Hochschild complex $(\text{CC}^{\bullet}(\A), b)$, with degrees shifted by one. It is then clear that the above 2-periodic complex exactly reconstitutes the Loday--Quillen cyclic bicomplex, as $\beta$ and $\ol{\partial}$ correspond to its (horizontal) arrows. Therefore, the 2-periodic complex above is equivalent to Connes' $(b,B)$-bicomplex. The result below provides a method to produce interesting $(b,B)$-cocycles via the formalism of algebra cochains.

%Now, let $\L$ be a differential graded algebra. The maps $\overline{\partial}$ and $\beta$ of the periodic complex induces maps from bar cochains to Hochschild cochains (with values in $L$) and conversely by pull-back. The following formula is a key step.

\begin{proposition} \label{thm: Quillen (b,B)-cocycles} Let $V$ be a vector space, let $\psi \in \Hom(\Omega \A,V)$, and let $\varphi \in \Hom(\bB,V)$ be the bar cochain defined by
\begin{equation*} 
\varphi_n(a_1, \ldots, a_n) = \psi_{n+1}(1, a_1, \ldots, a_n)
\end{equation*}   
Suppose that for each $n$, we have
\begin{align*}
\delta \psi_{n+1} = (-1)^n \overline{\partial} \varphi_{n+2}
\end{align*} %&\delta \varphi_n = (-1)^{n} \beta \psi_{n+1}, \quad  
and that in addition, $\psi_{n+1}(a_0,a_1, \ldots, a_n) = 0$ whenever $a_i = 1$ for at least one $i \geq 1$. Then, for all $n$, 
\[ B \psi_{n+1} = b \psi_{n-1}. \] 
i.e $(B-b)\psi = 0$.
\end{proposition}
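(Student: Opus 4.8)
The plan is to dualise the assertion and translate the operators of Connes' $(b,B)$-bicomplex into those of the $2$-periodic complex of Theorem~\ref{thm: 2-periodic complex}. Note first that, since $\psi_{n+1}$ is a cochain on $\Omega^n\A$ and $\psi_{n-1}$ one on $\Omega^{n-2}\A$, both $B\psi_{n+1}=\psi_{n+1}\circ B$ and $b\psi_{n-1}=\psi_{n-1}\circ b$ are cochains on $\Omega^{n-1}\A\simeq\ti{A}\otimes\A^{\otimes(n-1)}$. It therefore suffices to compare them on the two kinds of elements of $\Omega^{n-1}\A$: the forms $\omega=a_0\,da_1\cdots da_{n-1}$ with $a_0\in\A$, and the pure forms $\omega=da_1\cdots da_{n-1}$ spanning the summand $1\otimes\A^{\otimes(n-1)}=d(\Omega^{n-2}\A)$.

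The key algebraic fact is that Connes' operator factors through the bar construction: with $s\colon\bB_k\to\Omega^k\A$, $s(a_1,\dots,a_k)=1\otimes(a_1,\dots,a_k)=da_1\cdots da_k$, the insertion of the unit of $\ti{A}$ in the leading slot, I claim
\[
B\;=\;s\circ\overline{\partial}\qquad\text{on }\ \Omega\A .
\]
This is checked by comparing formulas: $B(a_0\,da_1\cdots da_k)=\sum_{j=0}^{k}(-1)^{jk}\,da_j\,da_{j+1}\cdots da_k\,da_0\cdots da_{j-1}$ is the signed cyclic sum of the pure forms attached to the rotations of $(a_0,\dots,a_k)$, while by the formulas of Theorem~\ref{thm: 2-periodic complex}, $\overline{\partial}(a_0\otimes(a_1,\dots,a_k))=\partial\natural(a_0\otimes(a_1,\dots,a_k))$ is precisely the same signed cyclic sum, now of the strings $(a_j,\dots,a_k,a_0,\dots,a_{j-1})$ in $\bB_{k+1}$; applying $s$ restores the $d$ in front of each summand. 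On the summand $1\otimes\bB_k$ the leading entry is the formal unit, both $B$ and $\overline{\partial}$ produce only strings carrying that unit in a non-leading slot, and such strings are annihilated by $s$ and by every normalised cochain; in particular the cochain $\varphi_n(a_1,\dots,a_n)=\psi_{n+1}(1,a_1,\dots,a_n)$ is normalised, by the hypothesis that $\psi_{n+1}(a_0,\dots,a_n)=0$ whenever $a_i=1$ for some $i\geq 1$.

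Granting $B=s\circ\overline{\partial}$, the case $\omega=a_0\,da_1\cdots da_{n-1}$ with $a_0\in\A$ is a one-line computation:
\[
(B\psi_{n+1})(\omega)\;=\;\psi_{n+1}\bigl(s(\overline{\partial}\omega)\bigr)\;=\;\varphi_n(\overline{\partial}\omega)\;=\;(\overline{\partial}\varphi_n)(\omega),
\]
the middle step being exactly the defining relation $\varphi_n=\psi_{n+1}\circ s$. Now invoke the hypothesis at level $n-2$, $\delta\psi_{n-1}=(-1)^{n-2}\,\overline{\partial}\varphi_n$, together with the identity recalled after Theorem~\ref{thm: 2-periodic complex} that $\delta$ acts on cochains over $\Omega\A$ as the signed Hochschild coboundary, $\delta(\,\cdot\,)=(-1)^{|\cdot|+1}(\,\cdot\,)\circ b''$ with $b''=b$; a routine sign check with the degree conventions of Section~1 collapses the prefactors to $+1$ and gives $(\overline{\partial}\varphi_n)(\omega)=(b\psi_{n-1})(\omega)$. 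For the pure form $\omega=da_1\cdots da_{n-1}$ the left side is $0$, since $B$ kills $d(\Omega\A)$; and the right side is $\pm(\delta\psi_{n-1})(\omega)=\pm(-1)^{n-2}\varphi_n(\overline{\partial}\omega)=0$, because every string appearing in $\overline{\partial}\omega$ carries the formal unit in a non-leading slot and is annihilated by the normalised $\varphi_n$. Combining the two cases gives $B\psi_{n+1}=b\psi_{n-1}$ for every $n$, i.e. $(B-b)\psi=0$.

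I expect the work to be bookkeeping rather than conceptual: pinning down the cyclic signs in the identity $B=s\circ\overline{\partial}$ and in the transition $\delta\leftrightarrow b$, and making rigorous the handling of the ``formal unit'' strings produced when $B$ or $\overline{\partial}$ is applied to the $1\otimes\bB$ summand. The single idea is that Connes' $B$ is ``insert a unit, then apply the cyclic antisymmetriser $\overline{\partial}$'', so that the input hypothesis — which already says $\delta\psi$ equals $\overline{\partial}$ of the companion bar cochain $\varphi=\psi\circ s$ — is just the $(b,B)$-cocycle identity in disguise.
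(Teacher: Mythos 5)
Your proof is correct and is essentially the paper's own argument, just unpacked. The paper's proof is a one-line chain of equalities,
\[
B\psi_{n+1}=\psi_{n+1}\bigl(1,\ol{\partial}(\,\centerdot\,)\bigr)=\ol{\partial}\varphi_n=(-1)^{n-2}\delta\psi_{n-1}=b\psi_{n-1},
\]
whose first equality is exactly your identity $B=s\circ\ol{\partial}$, whose second is $\varphi_n=\psi_{n+1}\circ s$, whose third is the hypothesis at level $n-2$, and whose last is the $\delta\leftrightarrow b$ sign translation. You make explicit the case split between $a_0\in\A$ and the pure forms $da_1\cdots da_{n-1}$, and you spell out the role of the normalisation condition in killing the strings that carry the formal unit; these points are left implicit in the paper, so your write-up is a more careful rendering of the same argument rather than a different route.
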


Discussing the proof will be useful in the sequel, so we provide it.

\begin{proof}
Notice that 
\[B\psi_{n+1}=\psi_{n+1}(1,\ol{\partial}(\centerdot)) = \ol{\partial}\varphi_n = (-1)^{n-2} \delta \psi_{n-1} = b\psi_{n-1}. \]
\end{proof}

Finally, two useful remarks: 
\begin{itemize}
\item[$\bullet$] The result above admits an immediate generalization if $V$ is a complex with differential $d$: in this case, one replaces $\delta$ by $\delta + d$, and the final conclusion is that $(B-b) \psi = \psi (B-b) = d\psi$. 
\item[$\bullet$] Notice that when the dg-algebra $\L$ comes equipped with a (graded) trace $\tau: \L \to V$, where $V$ is a vector space; we have a morphism of complexes 
\begin{align*}
\begin{array}{c c c c}
\tau^\natural : & \Hom(\Omega_1 \bB, \L) & \longrightarrow & \Hom(\Omega \A,V) \\
                &  f                 & \longmapsto     & \tau^\natural(f) = \tau f \natural
\end{array}
\end{align*}
Note also that this remains valid if $\tau$ is a trace on an $\L$-bimodule $\mathscr{M}$ (i.e a linear map that vanishes on $[\L,\mathscr{M}]$). 
\end{itemize}

\subsection{The JLO cocycle of an unbounded K-cycle}

The remarkable feature of Quillen's formalism of algebra cochains is that it recovers most of the important (periodic) cyclic cohomology classes via an abstraction of classical methods and constructions in Chern-Weil theory and Chern characters. We recall in this section how to construct the JLO cocycle via algebra cochains. \\

Let $H$ be a $\Z_2$-graded Hilbert space, and let $\L=\L(H)$ be the algebra of bounded operators on it. Suppose that $\A$ acts on $H$ as (even) bounded operators on $H$, and that the `heat operator' $e^{-tD^2}$ is a trace-class operator on $H$, for any $t>0$. One works with improper cochains in
\[ \Hom(\bB, \L) := \prod_{n \geq 0} \, \Hom(\bB_n, \L)\] 
substituting the direct sum in the original definition of the left-hand-side given in previous section by the direct product. Rigorously, one needs to work within the framework of \emph{entire cyclic cohomology}, and consider cochains subject to a certain growth condition. However, as stated in the introduction, we are in this article interested only in the formal aspects of the theory. Furthermore, we won't treat the case of an odd K-cycle. \\

%$(H, \rho, D)$, and let $\L = \L(H)$ denote the algebra of bounded operators on the $\Z_2$-graded Hilbert space $H$. Rigorously, to place ourselves within this framework, $\A$ and $\bB(\A)$ should be equipped with an appropriate bornology to include \emph{entire cochains}, which are elements of
%\[ \widehat{\Hom}(\bB(A), \L) \subseteq \prod_{n \geq 0} \, \Hom(\bB_n(\A), \L)\] subject to a certain growth condition (above, $\bB(\A) = \A^{\otimes n}$)). However, as stated in the introduction, we are in this article interested only in the formal aspects of the theory and will mostly stray away from these analytic matters. \\

Now, the main point is to view $D$ as a superconnection form associated to the superconnection 
\[ \D = \delta + \rho + D \]
Because $\rho$ is an algebra homomorphism $\rho: \A \to \L$, the curvature $\D^2$ is: 
\[ \D^2 = (\delta + \rho + D)^2 = D^2 + [D, \rho]. \] 
Using the classical Duhamel perturbation series: for any $t >0$, 
\[ e^{-tD^2} = \sum_{n \geq 0} (-t)^n \int_{\Delta_n} e^{-s_0 t D^2} [D, \rho] e^{-s_1 t D^2} \ldots [D, \rho] e^{- s_n t D^2} ds_1 \, \ldots \, ds_n \]
where $\Delta_n = \{(s_0, s_1, \ldots, s_n) \in \R^{n+1} \, ; \, s_i \geq 0, \sum_i s_i = 1 \}$ denotes the standard $n$-simplex. \\

Note also that by the $\theta$-summability conditions, $e^{-tD^2}$ and $[D,\rho]$ do belong to $\L$, but this is not the case for $D$. Consequently, $e^{-t\D^2}$ is in the (extended) cochain algebra, whereas $\D$ is not. \\

Finally, let $\Tr_s$ be the supertrace defined on the ideal of trace-class operators within $\L$, and set: 
\[ \psi = \Tr_s^{\natural} \big(\partial\rho \cdot e^{-t\D^2} \big) \in \Hom(\Omega \A, \C) \quad ; \quad \varphi = \Tr_s(e^{-t\D^2}) \in  \Hom(\bB, \C)\]
In particular,
\begin{gather*}
\psi_{n+1}(a_0, \ldots, a_n) = (-t)^n \int_{\Delta_n} \Tr_s \big(a_0 e^{-s_0 t D^2} [D, a_1] e^{-s_1 t D^2} \ldots [D, a_n] e^{-s_n t D^2} \big) \, ds_1 \, \ldots \, ds_n\\
\varphi_n(a_1, \ldots, a_n) = \psi_{n+1}(1, a_1, \ldots, a_n)
\end{gather*}
and we see that $\psi$ is the JLO cocycle. Quillen proves that $\psi$ is a $(b,B)$-cocycle by applying Proposition \ref{thm: Quillen (b,B)-cocycles} to the pair $(\psi, \varphi)$. 

\begin{theorem} \label{thm: Quillen (b,B)-cocycles bis} We have the following relations:
\[ \delta\varphi = \beta (\pm \psi) \quad ; \quad \delta(\pm \psi) = \ol{\partial}\varphi \]
where $\pm$ depends on the parity of $\psi$. 
\end{theorem}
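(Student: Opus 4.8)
The plan is to carry out a Chern--Weil type computation inside the dg-algebra of bar cochains, taking the superconnection picture at face value. One regards $\D=\delta+\rho+D$ as an odd superconnection whose curvature is the honest (improper) cochain $F:=\D^{2}=D^{2}+[D,\rho]\in\Hom(\bB,\L)$ (with components $F_{0}=D^{2}$, $F_{1}(a)=[D,a]$ and $F_{n}=0$ for $n\geq2$), and lets $e^{-tF}$ denote the element of the extended cochain algebra given by the Duhamel series written above. First I would record the Bianchi identity (formally $[\D,F]=0$): using that $\D^{2}$ commutes with $\D$ for purely associative reasons and that $\rho$ is an algebra homomorphism (so $\delta\rho=-\rho^{2}$), a direct check gives
\[ \delta F + [\rho,F] + [D,F] = 0 \qquad\text{in } \Hom(\bB,\L). \]
Differentiating the Duhamel expansion term by term --- harmless at the purely formal level adopted here, $\delta$, $[\rho,\,\centerdot\,]$ and $[D,\,\centerdot\,]$ all being graded derivations of the cochain algebra --- this propagates to the curvature form:
\[ \delta\big(e^{-tF}\big) = -[\rho,e^{-tF}] - [D,e^{-tF}]. \]
This ``heat equation'' drives both relations.

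For $\delta\varphi=\beta(\pm\psi)$ I would apply the supertrace. Since $\Tr_{s}$ is a graded trace on $\L$ it annihilates the $\L$-supercommutator $[D,e^{-tF}]$ in each degree, so $\delta\varphi=\Tr_{s}\big(\delta e^{-tF}\big)=-\Tr_{s}[\rho,e^{-tF}]$. It then suffices to unwind the two summands of $[\rho,e^{-tF}]=\rho\cdot e^{-tF}-e^{-tF}\cdot\rho$ with the cochain product formula and to move $\rho$ around one of the two traces using cyclicity of $\Tr_{s}$ on $\L$; one recognises precisely $\pm\,\psi\circ\beta$, because $\beta$ is exactly the operator that carries the outermost bar entry into the degree-zero slot --- once cyclically (via $\Tr_{s}(e^{-tF}\cdot\rho)=\pm\Tr_{s}(\rho\cdot e^{-tF})$ with the arguments permuted) and once directly. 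Tracking the Koszul signs from the total degrees yields the relation.

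For $\delta(\pm\psi)=\ol{\partial}\varphi$ I would use that $\Tr_{s}^{\natural}=\Tr_{s}\circ(\,\centerdot\,)\circ\natural$ is a morphism of complexes (the last remark above), so $\delta\psi=\Tr_{s}^{\natural}\big(\delta(\partial\rho\cdot e^{-tF})\big)$. Expanding by Leibniz and substituting $\delta(\partial\rho)=\partial(\delta\rho)=-\partial(\rho^{2})$ together with the heat identity, the connection terms telescope --- the $\partial\rho\cdot\rho\cdot e^{-tF}$ contributions cancel --- leaving an expression of the shape $\delta(\partial\rho\cdot e^{-tF})=\pm[\rho,\,\partial\rho\cdot e^{-tF}]\pm\partial\rho\cdot[D,e^{-tF}]$. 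Under $\Tr_{s}^{\natural}$ the commutator term drops out, by the cyclicity of $\Tr_{s}^{\natural}$ (which is where the cyclic-permutation content of $\natural$ is used), leaving $\delta\psi=\pm\Tr_{s}^{\natural}\big(\partial\rho\cdot[D,e^{-tF}]\big)$. A short computation with the Duhamel series then rewrites $\partial\rho\cdot[D,e^{-tF}]$ --- once its leading argument has been multiplied in and the cyclicity of $\Tr_{s}$ invoked --- as the insertion of an extra differential $[D,a_{0}]$ into the heat form; combined with the cyclic sum defining $\natural$, the resulting terms reassemble into $\varphi\circ\partial\natural=\ol{\partial}\varphi$. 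With both relations in hand, Proposition \ref{thm: Quillen (b,B)-cocycles} applies to the pair $(\psi,\varphi)$ and confirms that $\psi$ is a $(b,B)$-cocycle --- the JLO cocycle.

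I expect the main obstacle to be bookkeeping rather than anything conceptual: the Koszul signs in the cochain product and in $\delta f=(-1)^{|f|+1}fb'$, together with those built into $\natural$, $\partial$ and $\beta$, must all be reconciled, and --- relatedly --- one has to check that after the $[D,e^{-tF}]$-term is rewritten, the cyclic sum in $\natural$ genuinely reconstitutes $\ol{\partial}\varphi$ and nothing more. The cyclicity of $\Tr_{s}^{\natural}$ used to discard the $[\rho,\,\centerdot\,]$-commutator is best invoked as a structural property of Quillen's formalism rather than re-verified by hand. The analytic points --- convergence of the Duhamel/JLO series and its term-by-term differentiation --- are, as the text emphasises, immaterial at the formal level we work at.
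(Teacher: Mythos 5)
Your proposal follows essentially the same route as the paper: establish the Bianchi identity $[\D, e^{-\D^2}]=0$ (equivalently $\delta e^{-\D^2}+[\rho,e^{-\D^2}]+[D,e^{-\D^2}]=0$), expand $\delta(\partial\rho\cdot e^{-\D^2})$ by the Leibniz rule using $\delta\rho=-\rho^2$, invoke the trace property of $\Tr_s^\natural$ to discard the commutator, and identify the survivor with $\ol{\partial}\varphi$ via the Duhamel/differentiation formula. The only difference is presentational: the paper packages the cancellations by adding three equations (one equal to $\delta\psi$, the other two vanishing because $\tau^\natural$ kills commutators) so that Bianchi wipes out all the $\partial\rho\cdot(\cdots e^{-\D^2}\cdots)$ terms simultaneously, whereas you expand and telescope directly and, in addition, sketch the first relation $\delta\varphi=\beta(\pm\psi)$ which the paper explicitly declines to prove.
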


\begin{proof} Since we are only interested in the second relation, we will not deal with the other. We follow Quillen's exposition of \cite[Theorems 7 and 8]{Qui88} almost verbatim. \\

One first establishes the following Bianchi identity
\[ [\D, e^{-\D^2}] = \delta(e^{-\D^2})+ [\rho + D, e^{-\D^2}] = 0 \] 
which is a result of the differentiation formula:
\begin{equation} \label{fml: differentiation formula for derivations}\alpha(e^{-\D^2}) = \int_0^1 e^{-s\D^2} \alpha(-\D^2) e^{-(1-s)\D^2} \, ds  \end{equation}  
for any derivation $\alpha$ (chosen in this case to be $\alpha = [\D, \centerdot ] = \ad \, \D$), together with the fact that $\alpha(\D^2) = [\D, \D^2] = \delta(e^{-\D^2})+ [\rho + D, e^{-\D^2}] = 0$ (this can be verified by expanding the commutator, or simply by observing that $\D^2$ has even degree). \\

The next step is to evaluate $[\D, \psi] = [\D, \tau^{\natural} \big( \partial \rho \cdot e^{-D^2}\big)]$, which is constituted of the three terms in the middle column below:  
\begin{equation*}
\begin{array}{r c c l}
& \delta \tau^\natural \big(\partial \rho \cdot e^{-\D^2}\big) & = & \tau^\natural \big(\partial(-\rho^2) e^{-\D^2} - \partial \rho \cdot \delta e^{-\D^2}\big) \\
0 = & \tau^\natural\big([\rho, \partial \rho \cdot e^{-\D^2}]\big) & = &\tau^\natural \big((\rho \cdot \partial \rho + \partial \rho \cdot \rho)e^{-\D^2} - \partial \rho \cdot [\rho, e^{-\D^2}] \big) \\
0 = & \tau^\natural\big([\nabla, \partial \rho \cdot e^{-\D^2}]\big) & = &\tau^\natural\big([\nabla, \partial\rho] e^{-\D^2} - \partial \rho \cdot [\nabla, e^{-\D^2}]\big),
\end{array}
\end{equation*}   
the first line uses the fact that $\rho$ satisfies $\delta \rho + \rho^2 = 0$, as an algebra homomorphism. Adding these three equations, the rightmost terms of the third column are killed because of the first Bianchi identity, and among the leftmost terms of that same column, only $[\nabla, \partial \rho] e^{-\D^2} = \partial[\nabla, \rho] e^{-\D^2}$ survives. Therefore, 
\begin{equation*} 
\delta \tau^{\natural}\big(\partial \rho \cdot e^{-\D^2}\big) = \tau^{\natural}\big(\partial \D^2 \cdot e^{-\D^2}\big)
\end{equation*}
In addition, 
\begin{equation*} 
\overline {\partial}\tau\big(e^{-\D^2}\big) = \tau^\natural\big(\partial e^{-\D^2}\big) = \int_0^1 \tau^\natural\big(e^{-t\D^2} \cdot \partial \D^2 \cdot e^{-(1-t)\D^2}\big) dt = \tau^\natural\big(\partial \D^2 \cdot e^{-\D^2}\big),
\end{equation*}
the last equality comes from the trace property. As we have: \begin{equation*} 
\delta \tau^{\natural}\big(\partial \rho \cdot e^{-\D^2}\big) = \overline {\partial}\tau\big(e^{-\D^2}\big)\end{equation*}
which concludes the proof.
\end{proof}

\subsection{The JLO cocycle of an unbounded KK-cycle} \label{sec: bivariant JLO} 

Let $A, B$ be $C^*$-algebras. Given an unbounded $A$-$B$ Kasparov bimodule $(\E, \rho, D)$, the main point is, as in previous subsection, to view $D$ as a kind of superconnection form, in the context where $\L$ would basically be replaced by (a differential form version of) the algebra $\L_B(\E)$ of continuous $B$-linear maps. It is possible to extend the notion of $\theta$-summability to this bivariant context (see \cite{Per2002} for a detailed account of this), but we won't need the full extent of the theory for the geometric applications we have in mind, so it will be sufficient to remain at a formal level. \\   

Let $\A$ and $\B$ be smooth\footnote{i.e of Fr\'{e}chet-type and dense, but again, topology won't play any role in this article.} $*$-subalgebras of $A$ and $B$. By a slight abuse of language, we will consider the Kasparov bimodule $(\E, \rho, D)$ relatively to $\A$ and $\B$ instead of the associated algebras. Assume that the `heat operator' $e^{-tD^2}$ is densely-defined and extends to a bounded endomorphism of $\E$. For us, it is enough to limit ourselves to assuming $\E$ of the form $\E=H \otimes \B$, where $H$ is a $\Z_2$-graded Hilbert space. Extend $\E$ to a $\A$-$\Omega\B$-bimodule of `$\E$-valued differential forms':
\[ \Omega\E = \E \otimes_{\B} \Omega\B = H \otimes \Omega\B \] 
The differential $d$ on $\Omega \B$ naturally induces a differential on $\Omega\E$ that we continue to denote $d$:
\[ d(h \otimes \omega) = (-1)^{| h |} h \otimes d\omega \quad ; \quad \forall \xi \otimes \omega \in \Omega \E = H \otimes \Omega\B, \]  
This turns the algebra of endomorphisms $\L=\L_{\Omega \B}(\Omega \E)$ into a dg-algebra via the differential: 
\[ d\varphi = d \circ \varphi + (-1)^{| \varphi |} \varphi \circ d \quad ; \quad \forall \varphi \in \L. \]
The representation $\rho: \A \to \L_{\B}(\E)$ extends naturally to an algebra homomorphism map $\rho: \A \to \L$. Then, let us consider the `superconnection'
\[ \D = \delta + d + \rho + D, \]
viewing $D$ as a superconnection form. Since $\rho$ is an algebra homomorphism, we have $\delta \rho + \rho^2 = 0$ and viewing $d$ and $D$ as $0$-cochains, one sees that the `curvature' of $\D$ is 
\[ \D^2 = d(\rho+D) + [D, \rho] + D^2. \]
A different way to write this is to consider the superconnection $\nabla=d+D$ with curvature $\nabla^2=dD+D^2$; we then have
\[ \D^2 = \nabla^2 + [\nabla, \rho]. \]

Suppose we are given a trace $\tau: \mathcal{I} \to \Omega \B$ is a trace on an $\L$-bimodule $\mathcal{I}$, and assume that the heat operator $e^{-t\D^2}$ is also in $\mathcal{I}$ for every $t>0$. Then we can simply imitate the previous algebra cochains constructions without modification, i.e by introducing the Hochschild and bar cochains %and consider its extension $\tau \otimes 1$ to $\L_{\Omega\B}(\Omega\E)$, that we denote again by $\tau$. Then, define the following Hochschild and bar cochains with values in $\Omega\B$: 
\begin{gather*}
\psi = \tau^{\natural}\big(\partial \rho \cdot e^{-\D^2}\big) \in \Hom(\Omega \A, \Omega \B), \\
\varphi = \tau\big(e^{-\D^2}\big) = \psi(1, \centerdot ) \in \Hom(\Omega_1 \bB(\A), \Omega \B),
\end{gather*} 
and prove that $\psi \in \Hom(\Omega \A, \Omega \B_{\natural})$ is a cocycle from the $(b,B)$-complex of $\A$ to the de Rham--Karoubi complex of $\B$, i.e $\psi (B-b) = \pm \natural d \psi$. Observe that this construction extends straightforwardly if we replace the differential $d$ by a right connection $\nabla'$ on the $\B$-module $\E$, i.e a linear map $\nabla^{\E}: \Omega\E \to \Omega\E$ of degree $1$ such that 
\[ \nabla^{\E}(\xi \otimes \omega) = \nabla^{\E}(\xi)\omega + (-1)^{|\xi|} \xi \otimes d\omega \quad ; \quad \forall \xi \otimes \omega \in \Omega\E = \E \otimes_{\B} \Omega \B. \]

When the algebra $\B$ is commutative, then $\psi$ may really be considered as a representative of the bivariant Chern character of the $(\E, \rho, D)$. In general, this is not exactly the case and the cochain described above constitutes only a part of it. See \cite{Per02} for further details.  %the Appendix for the full construction of the Chern character. 

\section{Bivariant JLO cocycle and the Mathai-Quillen form}

We come now to the main subject of this article, namely the verification that Quillen's algebra cochains machinery is consistent with his superconnection formalism, which is kind of apparent when studying the history of the subject via his mathematical journals, but has surprisingly never really been brought to light, despite many potentially interesting applications.  

\subsection{The Bott element}
We consider first the Bott generator of the K-theory of $\R^{2n}$, represented by the Kasparov bimodule
\[ [\text{Bott}] = \big[(\E=S \otimes C_0(\R^{2n}), \rho, L) \big] \in \KK(\C, C_0(\R^{2n})). \]
where $S$ is the space of $2n$-dimensional spinors (hence $\E$ is the space of sections of the trivial bundle $E=S \times \R^{2n}$), $\rho:\C \to \L_{C_0(\R^{2n})}(\E)$ is the obvious homomorphism sending the unit $e \in \C$ to the identity. The operator $L(x)$ is the section of $\End(E)$ defined as the Clifford multiplication by $x \in \R^{2n}$ on the fiber $E_x = S$. \\

The space $S$ is a $\Z_2$-graded vector space of dimension $2^n$ equipped with inner product and anti-commuting hermitian involutions $\gamma^1, \ldots, \gamma^{2n}$ of odd degree that  generate the Clifford algebra of $\R^{2n}$. Identifying the latter with $\End(S)$, the operator $L$ writes (using the Einstein convention):
\[ L(x) = \sqrt{t} x_{\mu} \gamma^{\mu} . \]
The chirality element $\Gamma = (-\i)^n \gamma^1 \ldots \gamma^{2n}$ satisfies $\Gamma^2=1$ and is then a grading operator, which yields the supertrace is $\tr_s=\tr(\Gamma \, \centerdot)$. The latter satisfies the following identities, 
\begin{align*}
&\tr_s(\gamma^1 \ldots \gamma^{2n}) = (2\i)^n & &;& & \tr_s(\gamma^{i_1} \ldots \gamma^{i_p}) = 0 \text{ for } p < 2n. &&&&&&&&&&&
\end{align*}  
Finally, consider the trivial connection $\nabla^{\E}=d$ on $E$ induced by the de Rham differential, and for every $t>0$, form the superconnection $\nabla = d+\sqrt{t}L$ on $E$, which has curvature \[\nabla^2 = t L^2 + [d,tL] = t \Vert x \Vert^2 + \sqrt{t} dx_{\mu}\gamma^{\mu} .\] Then, the algebra cochain $\psi$ is given by
\[ \psi = \tr_s^{\natural}\big(\partial \rho \cdot e^{-\D^2}\big) \in \Hom(\Omega \C, \Omega(\R^{2n})) \]
where $\D = \delta + \rho + d + L$ is the `superconnection' acting at the level of cochains, with `curvature' 
\[\D^2 = \nabla^2 + [\nabla, \rho].\] 
Since the image of $\rho$ consists of multiples of the identity, the commutators $[\nabla, \rho]$ vanishes, so the only non-zero term in the Duhamel expansion of $e^{-\D^2}$ yields only one term $e^{-\nabla^2}$. Hence, $\psi$ reduces to a 1-cochain $\psi_1$, 
\begin{align*}
\psi_1 &= \tr_s(e^{-\nabla^2}) \\
	   &= e^{-t \Vert x \Vert^2} \tr_s\big((1-\sqrt{t}dx_{1} \gamma^{1}) \ldots (1-\sqrt{t}dx_{2n} \gamma^{2n})\big) \\
	   &= e^{-t \Vert x \Vert^2} (-\sqrt{t})^{2n} \tr_s(dx_{1} \gamma^1  \ldots dx_{2n} \gamma^{2n})\\
	   &= (2it)^n e^{-t \Vert x \Vert^2} dx_1 \ldots dx_{2n}    
\end{align*}
where the passage from the third line to the fourth uses the fact that $dx^{\mu}$ and $\gamma^{\mu}$ anti-commute in the algebra $\End(\Omega \E) = \End(S) \otimes \Omega(\R^{2n})$. \\

Consider now the Dirac element:
\[ [\slashed{D}] = \big[(L^2(\R^{2n}, S), \pi, D)\big] \in \KK(C_0(\R^{2n}), \C). \]
In this context, Bott periodicity is the fact that the Kasparov product $\big[\text{Bott} \big] \otimes [\slashed{D}] = 1$. On the other hand, via the JLO formula, the Chern character of $[\slashed{D}]$ is represented by the $(b,B)$-cocycle
\[ \ch[\slashed{D}](a_0 da_1 \ldots da_{2n}) = \dfrac{1}{(2\pi\i)^n}\int_{\R^{2n}} \, a_0 da_1 \ldots da_{2n} \in \Hom(\Omega C_c^{\infty}(\R^{2n}), \C)\]
i.e the fundamental class of $\R^{2n}$. Then the pairing $\ch[\slashed{D}] \circ \psi = 1$, which is consistent with Bott periodicity. 

\subsection{The Mathai--Quillen Thom form}

Let $E \overset{\pi}{\rightarrow} X$ be a complex vector bundle of rank $m$ over a smooth manifold $X$ of rank $m$, equipped with compatible Hermitian inner product and connection. Let $\Lambda E^*$ be the exterior power of its dual bundle $E^*$ endowed with the usual $\Z_2$-grading via forms of even/odd degree. We consider the \emph{Thom element}:
\[ [\mathscr{T}_E] = [(\E=C_0(E, \pi^*\Lambda E^*), \rho, L)] \in \KK(C_0(X), C_0(E)), \]  
where $\rho$ is the natural action of $C_0(X)$ on $\E$ through multiplication, and $L$ is the odd degree endomorphism on $\pi^{*}\Lambda E^*$ defined as follows:
\[ L(\xi) = \i\big((\xi^* \wedge \centerdot) \, - \, \iota_{\xi}\big) \, , \]
i.e that for every $\xi \in E$, $L_{\xi}$ acts on the fibers $(\pi^{*}\Lambda E^*)_{\xi} = \Lambda E^*_{\pi(\xi)}$. In the formula above, $\xi^* \in E^*_{\pi(\xi)}$ is the linear functional on $E_{\pi(\xi)}$ given by inner product with $\xi$, and $\iota_{\xi}$ denotes interior product with $\xi$ seen as a linear functional on $E^*_{\pi(\xi)}$. Note that $L_{\xi}^2=-\Vert \xi \Vert^2$; in other words, if we see $\pi^*\Lambda E^*$ as a Clifford module bundle over $E$, the endomorphism $L$ is the fiberwise Clifford multiplication operator.  \\  

The connection in $E$ induces connections in $\Lambda E^*$ and $\pi^*(\Lambda E^*)$, and therefore a superconnection $\nabla + L$ on $\pi^*(\Lambda E^*)$. Hence, setting\footnote{or eventually, replacing functions with compact support with ones in the Schwartz class, but this would really enforce the use of entire cyclic theory and bornologies} $\A=C_c^{\infty}(X)$ and $\B=C_c^{\infty}(E)$ we can form the algebra cochain superconnection $\D = \delta + \rho + \nabla + L$, whose curvature is
\[ \D^2 = (\nabla + L)^2 + [\nabla, \rho]. \]
Using a Duhamel expansion, the cochain $\psi = \tr_s^{\natural}\big(\partial \rho \cdot e^{-\D^2}\big) \in \Hom(\Omega C_c^{\infty}(X), \Omega C_c^{\infty}(E)_{\natural})$ writes:
\setlength{\mathindent}{15mm}
\begin{align*}
\psi(a_0, \ldots, a_k)=\sum_{k \geq 0} \int_{\Delta_k} \tr_s\left( \rho(a_0) e^{-s_0 (\nabla+L)^2} [\nabla, \rho(a_1)] e^{-s_1 (\nabla+L)^2} \ldots [\nabla, \rho(a_k)] e^{- s_k (\nabla+L)^2}\right) ds_1 \, \ldots \, ds_k
\end{align*}
where $\Delta_k$ is the standard $k$-simplex. \\

\setlength{\mathindent}{15mm}
On the one hand, $[\nabla, \rho(a_i)] = L(da_i)$. On the other hand, the work of Mathai--Quillen \cite{MQ86} provides an exact formula for $\tr_s(e^{(\nabla+L)^2})$:
\[ \tr_s(e^{(\nabla+L)^2}) = \left(\dfrac{\i}{2\pi}\right)^{-m} \text{det}\left(\dfrac{1-e^{\Omega}}{\Omega}\right) U \]
where $U$ is the Gaussian-shaped Thom form on $E$, whose integral over every fibre of $E$ is equal to $1$, and $\Omega$ is the curvature of a connection on the manifold $E$, so the factor involving the determinant represents the Todd genus of $E$. In the end, we find
\[ \psi(a_0, \ldots, a_k)= \left(\dfrac{\i}{2\pi}\right)^{-m} a_0 da_1 \, \ldots \, da_k \wedge \text{det}\left(\dfrac{1-e^{\Omega}}{\Omega}\right) U. \] 

Let us compare this with Kasparov's formulation of the Atiyah-Singer index theorem. Let $M$ be a closed manifold; choosing an almost complex structure on $TM$ (we will not really make a distinction between a bundle and its dual from that point), the Dolbeault operator $\ol{\partial}$ induces a K-homology fundamental class $[\ol\partial_{TM}] \in \KK(C_0(TM), \C)$. The latter is universal in the sense that the index of any elliptic operator $P$ with symbol class $[\sigma_P] \in \KK(\C, C_0(TM))$ is equal to the Kasparov product $[\sigma_P] \otimes [\ol\partial_{TM}]$. \\ %via the elliptic operator $\ol{\partial} + \ol{\partial}^*$,\\

Furthermore, consider an embedding $M \hookrightarrow \R^n$ with normal bundle $N \to M$, which induces an embedding $TM \hookrightarrow T\R^n$ with normal bundle $E=TN \to X=TM$. Note that $TN$ is isomorphic to the pull-back of $N \oplus N$ to $TM$, so it may naturally be equipped with a complex structure. The topological index is the composition 
\[ \index_t: K^0(TM) \overset{\text{Thom}}{\longrightarrow} K^0(TN) \overset{\text{excision}}{\longrightarrow} K^0(T\R^n) \overset{\text{Bott}}{\longrightarrow} \Z, \]
The first and last maps are respectively the Thom isomorphism and Bott periodicity, which may be described via the right Kasparov product with $[\mathscr{T}_{TN}] \in \KK(C_0(TM), C_0(TN))$ for the former, and the right Kasparov product with the class $[\slashed{D}] \in \KK(C_0(\R^{2n}), \C)$ for the latter. As for the excision map, it is induced by the inclusion $j: C_0(TN) \hookrightarrow C_0(T\R^n)$, where $TN$ is identified with an (open) tubular neighborhood $W$ of $TM$. \\

Kasparov proves the index theorem by deriving the following KK-factorization (this is not completely obvious): 
\[ [\ol{\partial}_{TM}] = [\mathscr{T}_{TN}] \otimes j^*[\slashed{D}_{T\R^n}]. \] 

Back to cyclic theory, recall that the JLO cocycle associated to $[\slashed{D}]$ is:
\[ \ch[\slashed{D}](a_0 da_1 \ldots da_{2n}) = \dfrac{1}{(2\pi\i)^n}\int_{\R^{2n}} \, a_0 da_1 \ldots da_{2n} \in \Hom(\Omega C_c^{\infty}(\R^{2n}), \C).\]
If we combine it with the cochain $\psi$ related to the Mathai-Quillen form, the multiplicativity of the Todd class yields, (picking up entries supported inside the tubular neighborhood $W$):
\[ (\ch[\slashed{D}] \circ \psi)(a_0 da_1 \ldots da_k) = \left(\dfrac{\i}{2\pi}\right)^{2\dim(M)} \int_{TM} \text{Todd}(TM\otimes\C) \wedge a_0 da_1 \, \ldots \, da_k  \]
where $\text{Todd}(TM\otimes\C)$ is the pull-back of the Todd class of the bundle $TM \to M$ to $TM$. This is consistent with the index theorem (and therefore the KK-factorization above).
 
%given by:
%\[U=\left(\dfrac{\i}}{2\pi}\right)^m e^{-|z|^2} \sum_{|I|=|J|} \varepsilon(I,I')\varepsilon(J,J') \mathrm{det}(\Omega_{IJ})\]  
%It would be natural to turn this cocycle into one landing in the $(b,B)$-complex of $\Omega B$, but this demands further technical steps. To this end, Perrot \cite{Per2002} takes $\tau$ to be a  \emph{partial trace} and sends the values of $\tau \mu \natural$ into the $X$-complex of $\B$ via the natural projections $\Omega \B \to X(\B)$. We will not need this general version, but the general construction is recalled in Appendix, and will be used in a long version of this note. 

\ignore{\appendix

\section{The bivariant Chern character} 

Following up Section \ref{sec: bivariant JLO} with the same notations, one first introduces a deformation of the cochain $\partial \rho \cdot e^{-\D^2}$. 

\begin{definition} \label{def: mu} Define 
\[ \mu = \int_{0}^1 e^{-t\D^2}\cdot \partial \rho \cdot e^{-(1-t)\D^2} \, dt \in \Hom(\Omega_1 \bB(\A), \L) \]
Composing with the universal cotrace $\natural: \Omega \A \to \Omega_1 \bB(\A)$, we get:
\[ \mu \natural \in \Hom(\Omega\A, \L). \]
If we denote $\Theta=d(\rho+D)+[D,\rho]$, then a Duhamel expansion gives
\[ e^{-t\D^2} = \sum_{n \geq 0} (-t)^n \int_{\Delta_n} e^{-s_0 t D^2} \, \Theta \, e^{-s_1 t D^2} \ldots  \, \Theta \,  e^{- s_n t D^2} ds_1 \, \ldots \, ds_n \]
where $\Delta_n = \{(s_0, s_1, \ldots, s_n) \in \R^{n+1} \, ; \, s_i \geq 0, \sum_i s_i = 1 \}$ denotes the standard $n$-simplex. 
\end{definition}

\begin{lemma} \emph{(Bianchi identities)} One has:
\begin{enumerate}[leftmargin=20mm]
\item[\emph{(i)}] $\big(\delta + d + [D  , \centerdot ] + [\rho , \centerdot]\big)\big(e^{-\D^2}\big) = 0,$
\item[\emph{(ii)}] $\big(\delta + d + [D  , \centerdot ] + [\rho , \centerdot]\big)\mu \natural\ = \mu \natural = \ol{\partial} e^{-\D^2}.$ 
\item[\emph{(iii)}] $(B-b)\mu \natural = [\nabla+\rho, \mu] \natural. $ 
\end{enumerate}
\end{lemma}

\begin{proof} We follow closely the proof of Theorem \ref{thm: Quillen (b,B)-cocycles bis} with appropriate modifications. The first Bianchi identity is derived exactly as in the first step of the latter. %For the first identity: applying the derivation formula \ref{fml: differentiation formula for derivations} with $\alpha = [\D, \bullet]$, we have \[ \alpha(e^{-\D^2}) = \int_0^1 e^{-t\D^2} \alpha(\D^2) e^{-(1-t)\D^2} \, dt  \]
%So it suffices to prove that $\alpha(\D^2)=0$, but notice that this is simply a different way to write the identity $[\D, \D^2]=0$, which is obvious (since $\D^2$ has even degree). \\ %For the first relation of the proposition, we have
%\begin{equation*}
%(\delta + d) \varphi  = \tau\big((\delta + \ad \, \nabla)(e^{-\D^2})\big) =  -\tau ([\rho, e^{-\D^2}]) = \pm \beta(\tau^\natural (\partial \rho \cdot e^{-\D^2})) 
%\end{equation*} 
%The second equality uses the trace property of $\tau$, the third is the Bianchi identity, and the last one is the following lemma. 
As for the second, the difference is that certain commutators (`second and third row') may not vanish after being fed into a partial trace, and their contribution needs to be added. In effect, we evaluate the quantity $(\delta + \ad \rho + \ad \, \nabla)(\partial \rho \cdot e^{-\D^2} \natural)$, which is again made of the three terms in the first column below: 
\setlength{\mathindent}{5mm}
\begin{align*}
\delta ( \mu \natural)\, \, & = \int_0^1\left(\delta e^{-t\D^2} \cdot \partial \rho \cdot e^{-(1-t)\D^2} + e^{-t\D^2} \cdot \partial(-\rho^2) \cdot e^{-(1-t)\D^2} - e^{-t\D^2} \cdot \partial \rho \cdot \delta e^{-(1-t)\D^2} \right) \natural \, dt, \\
[\rho, \mu]\natural & = \int_0^1 \left([\rho, e^{-t\D^2}] \cdot \partial \rho \cdot e^{-(1-t)\D^2} + e^{-t\D^2} \cdot (\rho \cdot \partial \rho + \partial \rho \cdot \rho) \cdot e^{-(1-t)\D^2} - e^{-t\D^2} \cdot \partial \rho \cdot [\rho, e^{-(1-t)\D^2}] \right)\natural \, dt, \\
[\nabla, \mu]\natural & = \int_0^1 \left([\nabla, e^{-t\D^2}] \cdot \partial \rho \cdot e^{-(1-t)\D^2} + e^{-t\D^2} \cdot [\nabla, \partial \rho] \cdot e^{-(1-t)\D^2} - e^{-t\D^2} \cdot \partial \rho \cdot [\nabla, e^{-(1-t)\D^2}] \right)\natural \, dt, 
\end{align*}
with the two last lines being non-zero this time. In each line, one uses the fact that $\delta$, $\ad \, \rho$ and $\ad \, \nabla$ are derivations on cochains, together with the fact that $\delta \rho + \rho^2 = 0$ in the first. Then, summing all three equations in a similar fashion to the previous case, the leftmost and rightmost terms in the right-hand-sides are killed because of the first the Bianchi identity, and among the middle terms, only the one involving $\partial[\nabla, \rho] e^{-\D^2} = [\nabla, \partial \rho] e^{-\D^2}$ survives. Therefore, \setlength{\mathindent}{20mm}
\begin{equation*} 
(\delta + \ad \rho + \ad \, \nabla)(\mu \natural) 
 = \int_0^1 e^{-t\D^2} \cdot \partial (\D^2) \cdot e^{-(1-t)\D^2} \natural \, dt = \ol{\partial} e^{-\D^2},
\end{equation*} 
by virtue of the differentiation formula:
\begin{equation*} 
\overline {\partial}e^{-\D^2} = \partial e^{-\D^2}\natural = \int_0^1 e^{-t\D^2} \cdot \partial (\D^2) \cdot e^{-(1-t)\D^2} \natural \, dt. 
\end{equation*}
For the last Bianchi identity, simply use the fact that $\delta$ becomes the Hochschild coboundary on Hochschild cochains and proceed as in the proof of Theorem \ref{thm: Quillen (b,B)-cocycles} to check the contribution of the operator $B$ on $\mu \natural$. %$\ol{\partial} b\, e^{-\D^2} $
%\[ \delta \mu \natural - \ol{\partial} \mu \natural = [\nabla + \rho, \partial \rho \cdot e^{-\D^2}]\natural
% = \partial (\D^2) \cdot e^{-\D^2}\natural.\]
\end{proof}

%The Bianchi identity is not enough to conclude that

\section{The X-complex approach to cyclic theory}
Here is a short account of $X$-complex introduced first in \cite{Qui1988}, and developed in \cite{CQ95, CQ98}. \\

Let $\mathcal{R}$ be an associative algebra. The \emph{$X$-complex} $X(\Rc)$ of $\Rc$ is the $2$-periodic complex 
\[ \begin{tikzpicture} 
\matrix(m)[matrix of math nodes, row sep=3em, column sep=2.5em, text height=2ex, text depth=0.25ex]{ X(\Rc) : \Rc  & \Omega^1 \Rc_{\natural} \\ }; 
\path[->,font=\scriptsize, >=angle 90] 
(m-1-1)([yshift= 2pt]m-1-1.east) edge node[above] {$\natural \dd$} ([yshift= 2pt]m-1-2.west) (m-1-2)
(m-1-2)([yshift= -3pt]m-1-2.west) edge node[below] {$\bar{b}$} ([yshift= -3pt]m-1-1.east) (m-1-1) ;
\end{tikzpicture} \]
where $\Omega^1 \Rc_{\natural} = \Omega^1 \Rc / b(\Omega^2 \Rc) = \Omega^1 \Rc / [\Rc,\Omega^1 \Rc]$, $\natural : \Omega^1 \Rc \to \Omega^1 \Rc_{\natural}$ is the canonical quotient map and $\ol{b}$ is the map induced by the Hochschild boundary, which is well-defined since $b$ vanishes on $b(\Omega^2 \Rc)$. For every $x_0\dd x_1 \in \Omega^1(\Rc)$, its image in the quotient is denoted $\natural x_0 d x_1$. One has $\natural d\circ\overline{b} = \overline{b}\circ \natural d = 0$. %, we may consider the total complex of $X(\Rc)$ endowed with the differential $\natural d \oplus \overline{b}$. \\

\ignore{The $X$-complex $X(\Rc)$ is actually a quotient of the $(B,b)$-complex (in homology) of $\Rc$ by its sub-complex 
\[\begin{tikzpicture}
\matrix(m)[matrix of math nodes, row sep=3em, column sep=2.5em, text height=2ex, text depth=0.25ex]
    {        & \vdots        & \vdots         & \vdots   \\
      \ldots & \Omega^2 \Rc    & b(\Omega^2 \Rc)  &  0  \\ 
      \ldots & b(\Omega^2 \Rc) & 0              & \\ 
      \ldots & 0             &                & \\};
\path[->, font=\scriptsize, >=angle 90] 
(m-1-2) edge node[left]{$b$} (m-2-2) 
(m-2-2) edge node[left]{$b$} (m-3-2)
(m-3-2) edge node[left]{$b$} (m-4-2)
(m-1-3) edge node[left]{$b$} (m-2-3) 
(m-2-3) edge node[left]{$b$} (m-3-3)
(m-1-4) edge node[left]{$b$} (m-2-4)
(m-2-4) edge node[above]{$B$} (m-2-3) 
(m-2-3) edge node[above]{$B$} (m-2-2)
(m-2-2) edge node[above]{$B$} (m-2-1)
(m-3-3) edge node[above]{$B$} (m-3-2)
(m-3-2) edge node[above]{$B$} (m-3-1)
(m-4-2) edge node[above]{$B$} (m-4-1);
\end{tikzpicture} \]
In other words, the $X$-complex is obtained from the lowest levels of the $(B,b)$-bicomplex.}

The relationship between the $X$-complex and the $(B,b)$-bicomplex is explained via the following theorem. 

\begin{theorem} \emph{(Cuntz--Quillen, \cite{CQ95})} We have the following
\begin{enumerate}[leftmargin = 10mm, label={\emph{(\roman*)}}]
\item We endow $\Omega^{\mathrm{even}} \A$ with the \emph{Fedosov product}
\[ \omega_1 \odot \omega_2 = \omega_1 \omega_2 + (-1)^{\deg(\omega_1)} d\omega_1 d \omega_2. \] 
Then, the map 
\[ \Omega^{\mathrm{even}} \A \longrightarrow T\A , \qquad a_0 d a_1 \ldots d a_{2k} \longmapsto a_0 \omega(a_1,a_2) \ldots \omega(a_{2k-1},a_{2k}) \]
where $\omega$ is the curvature defined by $\omega(a_i, a_j) = a_i a_j - a_i \otimes a_j$, is an isomorphism of algebras. \\

\item The map
\[ \Omega^{\mathrm{odd}} \A \longrightarrow \Omega^{1}_{\natural} T\A , \qquad a_0 d a_1 \ldots d a_{2k+1} \longmapsto \natural \big( a_0 \omega(a_1,a_2) \ldots \omega(a_{2k-1},a_{2k}) \big) d a_{2k+1} \]  
is a linear isomorphism. 
\end{enumerate}
%\item[(iii)] The isomorphism of between $\Omega \A$ and $X(T\A)$ described in (i) and (ii) identifies
%\end{enumerate} 
%the \emph{Hodge filtration} of $\Omega \A$ defined by
%\[ F^k \Omega \A = b (\Omega^{k+1} \A) \oplus \Omega^{k+1} \A \oplus \Omega^{k+2} \A \oplus \ldots \]
%with the $J\A$-adic filtration of $X(T\A)$ as defined in (\ref{J-adic}).  
\end{theorem}
The $X$-complex of $T\A$ is then isomorphic to a $\Z_2$-graded complex of the form 
\[ \begin{tikzpicture} 
\matrix(m)[matrix of math nodes, row sep=3em, column sep=2em, text height=2ex, text depth=0.25ex]{\Omega^{\mathrm{even}} \A  & \Omega^{\mathrm{odd}} \A \\ }; 
\path[->,font=\scriptsize, >=angle 90] 
(m-1-1)([yshift= 2pt]m-1-1.east) edge ([yshift= 2pt]m-1-2.west) (m-1-2)
(m-1-2)([yshift= -3pt]m-1-2.west) edge ([yshift= -3pt]m-1-1.east) (m-1-1) ;
\end{tikzpicture} \]
where the arrows are explicitely determined by Cuntz and Quillen. Modulo a chain homotopy equivalence, a rescaling factor and passing a certain completion, the total differential of this complex is $(B+b)$ and we obtain the following theorem. %Moreover, this chain homotopy equivalence also descends to the completion (this is not obvious at all), leading to the following theorem. 

%To this end, Cuntz and Quillen introduced for every ideal $J$ of $\Rc$ the \emph{$J$-adic filtration} of $X(\Rc)$. %It is defined as the follows
%\begin{align} \label{J-adic}
%& \begin{tikzpicture} 
%\node (a) at (0.5,0) {$J^{k+1}$}; 
%\node (b) at (3.5,0) {$\natural( J^{k+1} d\Rc + J^k d\Rc) ;$}; 
%\node (c) at (-1,0) {$F_J^{2k+1} X(\Rc) :$};
%\path[->,font=\scriptsize,>=angle 90] 
%([yshift= 2pt]a.east) edge node[above] {$\natural \dd$} ([yshift= 2pt]b.west) 
%([yshift= -2pt]b.west) edge node[below] {$\bar{b}$} ([yshift= -2pt]a.east);
%\end{tikzpicture} \\ 
%& \begin{tikzpicture} 
%\node (a) at (1,0) {$J^{k+1} + [J^k,\Rc]$}; 
%\node (b) at (3.5,0) {$J^k d\Rc$}; 
%\node (c) at (-1,0) {$F_J^{2k} X(\Rc) :$};
%\path[->,font=\scriptsize,>=angle 90] 
%([yshift= 2pt]a.east) edge node[above] {$\natural \dd$} ([yshift= 2pt]b.west) 
%([yshift= -2pt]b.west) edge node[below] {$\bar{b}$} ([yshift= -2pt]a.east);
%\end{tikzpicture} 
%\end{align}
%where $J^n$ is defined as $\Rc^+ = \Rc \oplus \C$ for $n \leq 0$. This defines a decreasing filtration of $X(\Rc)$. %The \emph{$J$-adic completions} of $\Rc$ and $X(\Rc)$ are defined by the following projective limits
%\[ \widehat{\Rc} = \underset{\underset{n}{\longleftarrow}}{\lim} \Rc/J^n , \quad \widehat{X}(\Rc) = %\underset{\underset{n}{\longleftarrow}}{\lim} X(\Rc) / F_J^n X \]  

\begin{theorem} \label{thm Fedosov} \emph{(Cuntz - Quillen, \cite{CQ95})} Denote the completions of $\Omega \A$ and $X(T\A)$ by
%\[ \widehat{\Omega} \A = \underset{\underset{n}{\longleftarrow}}{\lim} \Omega \A/F^n \Omega \A , \quad \widehat{X}(\A) = \underset{\underset{n}{\longleftarrow}}{\lim} X(T\A) / F_{J\A}^n X(T\A) \]
\[ \widehat{\Omega} \A = \prod_{k \geq 0} \Omega \A, \quad \widehat{T}\A = \underset{n}{\varprojlim} \, T\A/(J\A)^n \]
and consider the $J\A$-adic completion of $T\A$:
\[ X(\Th \A) = \underset{n}{\varprojlim} \, X\big(T\A/(J\A)^n\big) \]
where $J\A$ is the kernel of the multiplication map $T\A \to \A$. Then, we have
\[ \HP_{\bullet}(\A) = H_{\bullet}(\widehat{\Omega} \A) \simeq H_{\bullet}(X(\widehat{T}\A)) \]  
where the differential on $\widehat{\Omega} \A$ is $B+b$. 
The periodic cyclic cohomology is obtained by taking the dual complexes :
\[ \HP^{\bullet}(\A) = H^{\bullet}((\widehat{\Omega} \A)') \simeq H^{\bullet}((X(\widehat{T}\A)') \]  
\end{theorem}

It is therefore legitimate to define the \emph{bivariant cyclic homology} groups $\HP_{\bullet}(\A,\B)$ as the homology of the $\Hom$-complex $\Hom(X(\Th\A), X(\Th\B))$. }

%Reiterating the arguments in the proof of Proposition \ref{thm: %Quillen (b,B)-cocycles}, the bar and Hochschild cochains $\varphi$ %and $\psi$ satisfy the relation 
%\begin{align*}
%(\delta + d) \varphi = \pm \beta \psi \quad ; \quad   (\delta+d) %\psi = \pm \overline{\partial} \varphi %
%\end{align*} 
%The $\pm$ means that the sign is positive in the even case and negative in the odd case. 

%\begin{proof}
%Since $\partial$ is a derivation, we have $\partial D^2 = (\partial D)D + D(\partial D) = [D, \partial D]$. Then, using the differentiation formula \ref{fml: differentiation formula for derivations}, we have:
%\[ \partial e^{-\D^2} = \int_{0}^1 e^{-t\D^2} \partial \rho e^{-(1-t)\D} \, dt \in \Hom(\Omega_1 \bB(\A), \L) \] 
%\end{proof}
 
%%%%%%%%%%%%%%%%%%%%%%%%%%%%%%%%%%%%%%%%
%%%%%%%%%%%% References %%%%%%%%%%%%%%%

\providecommand{\bysame}{\leavevmode\hbox to3em{\hrulefill}\thinspace}
\providecommand{\MR}{\relax\ifhmode\unskip\space\fi MR }
% \MRhref is called by the amsart/book/proc definition of \MR.
\providecommand{\MRhref}[2]{%
  \href{http://www.ams.org/mathscinet-getitem?mr=#1}{#2}
}
\providecommand{\href}[2]{#2}

\end{document}